\def\acts{\curvearrowright}
\DeclarePairedDelimiter\abs{\lvert}{\rvert}%
\DeclarePairedDelimiter\norm{\lVert}{\rVert}%
\let\oldabs\abs
\def\abs{\@ifstar{\oldabs}{\oldabs*}}
\let\oldnorm\norm
\def\norm{\@ifstar{\oldnorm}{\oldnorm*}}
\theoremstyle{definition}
\newtheorem{thm}{Theorem}
\newtheorem{lem}[thm]{Lemma}
\newtheorem{defn}[thm]{Definition}
\newtheorem*{example}{Example}
\newtheorem{remark}[thm]{Remark}
\newtheorem{question}[thm]{Question}
\DeclareMathOperator{\Aut}{Aut}
\let\phi\varphi
\let\empt\varnothing
\newcommand{\EE}{\mathbb{E}}      
\newcommand{\RR}{\mathbb{R}}      
\newcommand{\PP}{\mathbb{P}}      
\newcommand{\MM}{\mathbb{M}}      
\newcommand{\NN}{\mathbb{N}}      
\newcommand{\HH}{\mathbb{H}}      
\newcommand{\1}{\mathbbm{1}}
\newcommand\restr[2]{{
  \left.\kern-\nulldelimiterspace 
  #1 
  \vphantom{\big|} 
  \right|_{#2} 
  }}
\begin{document}
 
\title{Indistinguishability of cells for the ideal Poisson Voronoi tessellation}
\author{Sam Mellick}
\date{\today}

\begin{abstract}
In this note, we resolve a question of D'Achille, Curien, Enriquez, Lyons, and Ünel \cite{IPVTlyons} by showing that the cells of the ideal Poisson Voronoi tessellation are indistinguishable. This follows from an application of the Howe-Moore theorem and a theorem of Meyerovitch about the nonexistence of thinnings of the Poisson point process. We also give an alternative proof of Meyerovitch's theorem. 
\end{abstract}

\maketitle

\section{Introduction}

The Poisson Voronoi tessellation is a fundamental object of study in stochastic geometry. Recently there has been interest in the following natural question: how does the Poisson Voronoi tessellation behave as the intensity of the underlying Poisson point process tends to zero?

In Euclidean space, the answer is straightforward: the limiting tessellation is trivial (that is, it is just one cell consisting of the entire space). However, on other symmetric spaces there is a nontrivial limiting object referred to as the \emph{Ideal Poisson Voronoi Tessellation (IPVT)}. This object was proposed in \cite{BCP} for the hyperbolic plane and investigated thoroughly in \cite{IPVTlyons} for hyperbolic space of arbitrary dimension. The discrete analogue of the model (on a $d$-regular tree) was also studied earlier in the PhD thesis \cite{Sandeep}. Inspired by \cite{BCP} and independently of \cite{IPVTlyons}, the model was also studied in \cite{FMW} for more general symmetric spaces, where an application to measured group theory was found. The limit tessellation consists of infinite volume cells. In higher rank symmetric spaces, the tessellation has a quite remarkable property: not only do every pair of cells of the tessellation touch, but they do so along an unbounded ``wall''. This is the key property in the proof \cite{FMW} that higher rank semisimple Lie groups have fixed price one, an important fact in the resolution of conjectures of Gaboriau\cite{GaboriauCout} and Abért, Gelander, and Nikolov\cite{AGN}.

Question 7.8 of \cite{IPVTlyons} asks if the cells of the IPVT on hyperbolic space are \emph{indistinguishable}. This would mean that there is no measurable, equivariant, and deterministic way to select some of the cells in a nontrivial fashion (that is, other than selecting all of the cells, or none of the cells). Indistinguishability is a property that exists in some percolation models\cite{LyonsSchrammIndistinguishable} and has some tremendous applications in measured group theory \cite{GaboriauLyons}. 

In this paper we show that the IPVT of a general symmetric space (in particular, for hyperbolic space $\HH^d$) has indistinguishable cells:

\begin{thm}\label{SellTheorem}
    Let $G$ be a real semisimple Lie group, and $K < G$ its unique (up to conjugacy) maximal compact subgroup. Denote by $X = G/K$ the associated symmetric space. Then the ideal Poisson Voronoi tessellation of $X$ has the property that its cells are indistinguishable. The same is true for the IPVT on a product of regular trees.
\end{thm}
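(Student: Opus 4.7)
The plan is to identify the cells of the IPVT with the points of a canonical Poisson point process carrying a $G$-invariant intensity, and then to deduce indistinguishability directly from Meyerovitch's theorem on the nonexistence of equivariant thinnings, with the Howe--Moore theorem supplying the mixing that makes Meyerovitch's theorem available in the semisimple setting.

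The first step is to recall that in the construction of the IPVT on $X = G/K$ the cells of the limit tessellation are in canonical $G$-equivariant bijection with the points of a Poisson point process $\Pi$ on an auxiliary $G$-space $Y$ equipped with an explicit $G$-invariant intensity measure. For hyperbolic space this $Y$ can be taken to be $\partial X \times \RR$ with the intensity arising from the rescaling used to send the intensity of the underlying Poisson process to zero (each point $(\xi,t)$ indexing the cell defined by the horoball at $\xi$ of level $t$), and an analogous parameter space of horoballs or asymptotic seeds is canonical both for higher-rank symmetric spaces and for products of regular trees. Under this identification, any measurable $G$-equivariant rule $\omega \mapsto S(\omega)$ selecting a subset of cells of the IPVT transports to a measurable $G$-equivariant thinning $\Pi' \subseteq \Pi$ of the label process, where $\Pi'$ is exactly the set of labels of selected cells.

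The second step is to apply Meyerovitch's theorem to $\Pi$: the only such thinnings are the trivial ones, i.e.~$\Pi' = \Pi$ almost surely or $\Pi' = \varnothing$ almost surely. Pulling back through the bijection, this forces $S(\omega)$ to equal either the set of all cells or the empty set almost surely, which is precisely the statement of indistinguishability. The role of the Howe--Moore theorem at this stage is to supply the mixing of the $G$-action on the Poisson suspension of $\Pi$ that underlies Meyerovitch-type arguments: Howe--Moore guarantees that matrix coefficients of any unitary representation of the semisimple Lie group $G$ vanish at infinity off the invariant subspace, so that the Koopman representation on configurations of $\Pi$ is mixing along any sequence going to infinity in $G$. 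This is the input that extends Meyerovitch's theorem from its original (amenable) setting to the $G$-actions relevant here, and the same input adapts immediately to a product of regular trees by taking $G$ to be the product of the corresponding tree automorphism groups.

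The main obstacle I expect is the first step: making the correspondence between cells of the IPVT and points of an explicit $G$-equivariant Poisson point process fully precise in the limiting regime, including a careful treatment of the conjugation-and-limit procedure by which the IPVT is constructed and the verification that the resulting label process really is Poisson with the claimed invariant intensity, uniformly across the families of spaces covered by the theorem. Once that identification is in hand, the reduction to Meyerovitch's theorem together with the Howe--Moore input is essentially formal.
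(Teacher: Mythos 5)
Your overall strategy is the one the paper uses: identify the cells of the IPVT with the points of a Poisson point process on an auxiliary $G$-space carrying an invariant intensity (in the paper this is $Z = G/P \times \RR_{\geq 0}$, with $P$ the minimal parabolic, which for $\HH^d$ is your $\partial X \times \RR$), transport a cell-selection rule to an equivariant thinning, and invoke Meyerovitch's theorem. The identification step you flag as the main obstacle is, in the paper, simply quoted from Question 7.8 of the Lyons et al.\ paper and the construction in \cite{FMW}, so that part is fine.

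The genuine gap is in what you call the ``essentially formal'' second step. Meyerovitch's theorem does not merely require mixing (or ergodicity) of the $G$-action on the configuration space $\MM(Z)$; its hypothesis is ergodicity of the \emph{product} action $G \acts (Z \times \MM(Z), m \otimes \mu)$, where $(Z,m)$ is an \emph{infinite} measure space. Howe--Moore does give mixing of $G \acts (\MM(Z),\mu)$ (via the matrix-coefficient argument you describe, applied to finite-volume subsets of $Z$), but the product of an infinite-measure-preserving ergodic action with a mixing probability-measure-preserving action is not automatically ergodic, and this is where the real work lies. The paper's Lemma \ref{ErgodicityLemma} handles it by a case analysis: if $G \acts (Z,m)$ is properly ergodic, one invokes Schmidt's result that the product of a mildly mixing action with a properly ergodic conservative action is ergodic; if instead $G \acts (Z,m)$ is essentially transitive, i.e.\ isomorphic to $G \acts G/H$, one uses double recurrence to rule out $H$ compact and then runs a direct argument using mixing of the noncompact stabilizers $gHg^{-1}$ on $\MM(G/H)$. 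Your proposal, as written, would let you conclude only if the hypothesis of Meyerovitch's theorem were ergodicity of the suspension alone; to close the argument you need to supply (some version of) this product-ergodicity lemma. A secondary, smaller inaccuracy: Meyerovitch's theorem is not being ``extended from an amenable setting'' here --- it is stated and proved in exactly the generality of the ergodicity hypothesis above, and the contribution of Howe--Moore is confined to verifying that hypothesis.
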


Theorem \ref{SellTheorem} follows from a result of Meyerovitch\cite{Meyerovitch} and an application of the Howe-Moore theorem. We include a streamlined proof of Meyerovitch's result, making use of the \emph{Mecke equation}, a characterisation of the Poisson point process.

\begin{thm}[Meyerovitch]\label{MeyerovitchTheorem}
Let $(X, m)$ be a measure space with $m$ a $\sigma$-finite measure. Suppose $G$ is a group acting on $X$ and preserving $m$. If the action $G \acts (X \times \MM(X), m \otimes \mu)$ is ergodic (where $\mu$ is the distribution of the Poisson point process on $X$ with intensity $m$), then the Poisson point process on $X$ admits no nontrivial thinnings.
\end{thm}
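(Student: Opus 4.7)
My plan is to argue by contradiction: suppose a nontrivial thinning exists, transport it via the Mecke equation into a nonconstant, $G$-invariant, $\{0,1\}$-valued measurable function on $(X \times \MM(X), m \otimes \mu)$, and then invoke the ergodicity hypothesis directly. A (deterministic, factor) thinning is encoded by a measurable, $G$-equivariant map $T$ defined on pairs $(x, \omega)$ with $x \in \omega$ and taking values in $\{0, 1\}$; the thinned configuration is $\omega' = \{x \in \omega : T(x, \omega) = 1\}$, and ``trivial'' means either $\omega' = \omega$ or $\omega' = \emptyset$ almost surely.

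The key step is to bring in the Mecke equation, equivalently the Slivnyak--Mecke description of Palm measures: for the Poisson point process with intensity $m$, the reduced Palm measure at $x \in X$ is the law of $\omega \cup \{x\}$ with $\omega$ itself Poisson$(m)$-distributed. Setting
\[
\tilde T(x, \omega) := T(x, \omega \cup \{x\})
\]
therefore produces a measurable, $G$-equivariant, $\{0,1\}$-valued function on $(X \times \MM(X), m \otimes \mu)$, and nontriviality of the thinning corresponds precisely to $\tilde T$ being nonconstant modulo $m \otimes \mu$.

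Finally, by ergodicity of the $G$-action on $(X \times \MM(X), m \otimes \mu)$, any $G$-invariant measurable function must be essentially constant; a $\{0,1\}$-valued essentially constant function is either $0$ or $1$ a.e., contradicting the assumed nontriviality of $\tilde T$. The main obstacle and substantive content of the argument lies in the Palm-measure bookkeeping: one must verify that the selection rule $T$ can indeed be identified with a $G$-equivariant function on the product space $(X \times \MM(X), m \otimes \mu)$, and that nontriviality is preserved under this identification (this is exactly the role of Mecke). Once this setup is in place, the invocation of ergodicity closes the argument immediately.
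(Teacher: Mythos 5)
Your proposal is correct and follows essentially the same route as the paper: your function $T(x,\omega\cup\{x\})$ is exactly the indicator of the $G$-invariant set $A=\{(x,\omega): x\in\theta(\omega\cup\{x\})\}$ used in the paper, ergodicity forces it to be a.e.\ constant, and the Mecke equation supplies the equivalence between constancy of this function and triviality of the thinning (the paper isolates that equivalence as a separate lemma, noting in the constant-$1$ case that one also needs $\theta(\Pi)\subseteq\Pi$ to upgrade equality of expected counts to almost sure equality of configurations).
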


\section*{Acknowledgements}

The author acknowledges the support from the Dioscuri programme initiated by the Max Planck Society, jointly managed with the National Science Centre in Poland, and mutually funded by Polish the Ministry of Education and Science and the German Federal Ministry of Education and Research.

The author would also like to thank Russ Lyons, Mikolaj Fraczyk, Antoine Poulin, and Ben Hayes for comments that improved the readability of the paper.

\section{Background}

We now review the necessary background on the Poisson point process. For a self-contained exposition using the same notation, see \cite{AbertMellick}. For an introduction to the Poisson point process, see \cite{Kingman}, and for a thorough treatment see \cite{LastPenrose} and \cite{Baccelli}. We also recommend the reference works \cite{DVJ1} and \cite{DVJ2}.

Let $X$ be a complete and separable metric space, and $m$ a $\sigma$-finite Borel measure on $X$ with no atoms. The typical case we are interested in is the following: let $G$ denote a locally compact, second countable, and unimodular group. If $H < G$ is a closed and unimodular subgroup, then there exists a unique (up to scaling) $G$-invariant measure $\lambda$ on the quotient space $G/H$. An important subcase here is when $H = 1$, and so we are just taking a group and its Haar measure.

We denote by $\MM = \MM(X)$ the \emph{configuration space}:
\[
    \MM = \{ \omega \subset X \mid \omega \text{ is locally finite} \}.
\]

We equip the configuration space with the Borel structure such that the ``point counting functionals'' $\omega \mapsto \abs{\omega \cap A}$ are measurable for all Borel subsets $A \subseteq X$.  

A \emph{point process} on $X$ is a random element $\Pi$ of the configuration space $\MM$. In this note, we will always have a continuous action $G \acts X$ of a group which preserves $m$. We say that the point process $\Pi$ is \emph{invariant} if its distribution is $G$-invariant. We say that a point process $\Pi$ on $X$ is \emph{Poisson} (with intensity $m$) if it satisfies the following two conditions
\begin{enumerate}
    \item For all Borel $A \subseteq X$, the random variable $\abs{\Pi \cap A}$ is Poisson distributed with parameter $m(A)$. 
    \item For all Borel $A, B \subseteq X$, if $A$ and $B$ are disjoint then the random variables $\abs{\Pi \cap A}$ and $\abs{\Pi \cap B}$ are independent.
\end{enumerate}

Throughout this note, $\mu$ will denote the distribution of the Poisson point process being discussed.

Recall that a random variable $Z$ is \emph{Poisson with parameter $t > 0$} (in short, $Z \sim \mathtt{Pois}(t)$) if
\[
    \PP[Z = k] = e^{-t} \frac{t^k}{k!}.
\]

We will make use of the following characterisation of the Poisson point process, referred to in the literature as either the \emph{Mecke equation}\cite{LastPenrose} or the \emph{Slivnyak-Mecke theorem}\cite{Baccelli}:

\begin{thm}[Mecke equation, see for example Theorem 4.1 of \cite{LastPenrose}]
    A point process $\Pi$ on $X$ is Poisson with intensity $m$ if and only if for all measurable functions $f : X \times \MM \to \RR_{\geq 0}$ we have
    \[
    \EE\left[ \sum_{x \in \Pi} f(x, \Pi) \right] = \int_{X} \EE[f(x, \Pi \cup \{x\})] dm(x).
    \]
\end{thm}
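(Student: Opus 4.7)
The plan is to prove the two implications separately. Throughout, I would use that (from $\sigma$-finiteness and atomlessness of $m$) $X$ is an increasing union of sets $X_n$ with $m(X_n) < \infty$, and the structural fact that conditional on $\abs{\Pi \cap A} = k$ the restriction $\Pi \cap A$ consists of $k$ i.i.d.\ samples from $m|_A / m(A)$, independent of $\Pi \cap A^c$.

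For the forward direction (Poisson implies Mecke), I would first establish the identity on product test functions $f(x, \omega) = \mathbbm{1}_A(x) g(\omega)$ with $m(A) < \infty$. Conditioning on $N := \abs{\Pi \cap A}$ and using the i.i.d.\ description, the left-hand side equals
\[
\EE\Bigl[\sum_{i=1}^N g\bigl(\{Y_1, \dots, Y_N\} \cup (\Pi \cap A^c)\bigr)\Bigr],
\]
where $Y_1, \ldots, Y_N$ are i.i.d.\ samples from $m|_A / m(A)$, independent of $\Pi \cap A^c$. The size-biasing identity $\EE[N \cdot h(N)] = m(A)\, \EE[h(N+1)]$ for $N \sim \mathtt{Pois}(m(A))$, combined with the exchangeability of the $Y_i$, rewrites this expression as $\int_A \EE[g(\Pi \cup \{x\})] \, dm(x)$. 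A monotone class argument together with the $\sigma$-finite reduction then upgrades this to arbitrary nonnegative measurable $f$.

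For the converse (Mecke implies Poisson), I would pin down the Laplace functional $L(v) := \EE[\exp(-\int v \, d\Pi)]$ for nonnegative compactly supported $v$, which is known to characterise the law of $\Pi$ on $\MM$. Writing $L_v(t) := L(tv)$, differentiation gives $L_v'(t) = -\EE[\sum_{x \in \Pi} v(x) e^{-t \int v \, d\Pi}]$. Applying the Mecke hypothesis to $f(x, \omega) = v(x) e^{-t \int v \, d\omega}$ yields the ODE
\[
L_v'(t) = -L_v(t) \int v(x) e^{-t v(x)} \, dm(x),
\]
whose unique solution with $L_v(0) = 1$ is $L_v(t) = \exp\bigl(-\int (1 - e^{-tv(x)}) \, dm(x)\bigr)$. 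Evaluating at $t = 1$ recovers the Laplace functional of the Poisson point process with intensity $m$, identifying the law of $\Pi$.

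The main obstacle I anticipate is the measurability and monotone-class bookkeeping in the forward direction, in particular handling $f(x, \omega)$ that depends jointly on $x$ and on $\omega \cap A$ rather than only on $\omega \cap A^c$. The essential combinatorial content is entirely the size-biasing identity for a Poisson random variable, but threading it through general $f$ (and ensuring the relevant integrals are well-defined after the $\sigma$-finite reduction) is where the technical care is needed.
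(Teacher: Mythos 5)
Your proof is correct in both directions, but be aware that the paper does not actually prove this statement: it is quoted from Theorem 4.1 of \cite{LastPenrose}, and the only in-text justification is the subsequent Remark, which treats a special case of the converse. There, the paper plugs $f(x,\omega) = \1[x \in A]\1[\abs{\omega \cap A} = k]$ into the Mecke equation to obtain the recurrence $k\,\PP[\abs{\Pi\cap A}=k] = m(A)\,\PP[\abs{\Pi\cap A}=k-1]$, identifies the one-dimensional marginals as Poisson, and then invokes R\'enyi's theorem to conclude. Your converse reaches the same conclusion by a genuinely different route, the Laplace-functional ODE, which avoids the appeal to R\'enyi at the cost of needing that the Laplace functional on nonnegative simple test functions determines the law of a point process; your forward direction (mixed-binomial representation plus Poisson size-biasing) is essentially the argument in \cite{LastPenrose} and is not discussed in the paper at all. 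Both routes are standard and both work. Two small repairs: since $X$ is only a complete separable metric space with a $\sigma$-finite measure, ``compactly supported $v$'' should read ``bounded $v$ vanishing off a set of finite $m$-measure'' (compact sets need not have finite measure, and the exhaustion $X_n$ shows such $v$ suffice); and in the set-theoretic formulation used here, the step $\int v\,d(\Pi\cup\{x\}) = \int v\,d\Pi + v(x)$ requires $\PP[x\in\Pi]=0$ for $m$-a.e.\ $x$, which follows from Fubini and atomlessness of $m$ via $\EE[m(\Pi)]=0$. Neither affects the substance of the argument.
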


\begin{remark}
The Poisson distribution can be characterised in the following way: a random variable $Z \in \NN$ has the $\mathtt{Pois}(t)$ distribution for $t > 0$ if and only if
\[
    k \PP[Z = k] = t \PP[Z = k-1] \text{ for all } k \in \NN.
\]

The equation clearly follows from the explicit form of the density function for the Poisson, and one can show that the implied recurrence admits a unique solution.

Now consider the Mecke equation for the specific function $\1[x \in A]\1[\abs{\Pi \cap A} = k]$, where $A \subseteq X$ has finite $m$-measure and $k \in \NN$. The lefthand side of the equation has the form
\[
   \EE\left[ \sum_{x \in \Pi} f(x, \Pi) \right] = \EE[k.\1[\abs{\Pi \cap A} = k]] = k \PP[\abs{\Pi \cap A} = k],
\]
whilst the righthand side of the equation has the form
\[
    \int_{X} \EE[f(x, \Pi \cup \{x\})] dm(x) = \int_{A} \PP[\abs{(\Pi \cup \{x\}) \cap A} = k] dm(x) = m(A) \PP[\abs{\Pi \cap A} = k-1].
\]

In this way we see that $\Pi$ satisfies the Mecke equation for all functions of that form if and only if the random variables $\Pi \cap A$ are Poisson with parameter $m(A)$, which is the first requirement for $\Pi$ to be the Poisson point process with intensity $m$. In fact, it is known due to Rényi that this already implies that $\Pi$ is the Poisson point process with intensity $m$, see for example Section 6.3 of \cite{LastPenrose}.

\end{remark}

\begin{defn}
Let $\Pi$ be an invariant point process on $X$. A \emph{thinning} of $\Pi$ is a measurable and equivariant map $\theta : \MM \to \MM$ such that $\theta(\Pi) \subseteq \Pi$ almost surely. If $\theta(\Pi) = \Pi$ almost surely, then we say that the thinning is \emph{full}, whilst if $\theta(\Pi) = \empt$ almost surely, then we say that the thinning is \emph{empty}.

We say that a point process is \emph{indistinguishable} if it admits no nontrivial (that is, neither full nor empty) thinning.
\end{defn}

Theorem \ref{MeyerovitchTheorem} states that, in great generality, there are \emph{no} nontrivial thinnings of the Poisson point process. To illustrate this fact, we include some \emph{nonexamples} to show the types of constructions that do not work.

\begin{example}
    When $(X, m) = (G, \lambda)$, then there is a left-invariant proper metric $d$ on $G$. We may define a family of thinnings $\theta_r$ for $r > 0$ by
    \[
        \theta_r(\Pi) = \{ g \in \Pi \mid d(g, h) \geq r \text{ for all } h \in \Pi \setminus \{g\} \},
    \]
    that is, $\theta_r(\Pi)$ consists of all $r$-isolated points. This thinning may be trivial depending on $\Pi$, but it is always nontrivial for the Poisson point process.
\end{example}

\begin{example}
    For a nonexample, we may take ``independent thinning''. For each point of $\Pi$, independently toss a biased coin with heads probability $p$ and tails probability $1-p$, and let $\theta(\Pi)$ be the points whose coin came up heads. Then $\theta$ is \emph{not} a thinning in the above sense, as it is not a factor of the points of $\Pi$ alone (one requires the additional randomness of the coin tosses). It is a thinning of the ``IID marking'' $[0,1]^\Pi$, however. 

    The IID marking of the Poisson point process may be viewed\footnote{See for example Theorem 5.6 of \cite{LastPenrose}.} as the Poisson point process on $G \times [0,1]$, with $G$ acting trivially on the second coordinate. So this does not fall into the framework of Theorem \ref{MeyerovitchTheorem} (as it is not an ergodic action), and so we should not be surprised that it admits nontrivial thinnings.
\end{example}

\begin{example}
    In \cite{DetThinning}, it is shown that the Poisson point process $\Pi$ on $\RR^n$ can be thinned to produce the Poisson point process of any lower intensity. This was later extended in \cite{Amanda} to compactly generated groups. The reverse problem -- is it possible to \emph{add} points to a Poisson point process and get a Poisson point process of higher intensity -- has also been studied\cite{PoissonThickening}.
\end{example}

\begin{example}
Consider the case of a homogeneous space $(X, m) = (G/H, \lambda)$. Then $G/H$ is metrisable (see Section 8.14 of \cite{HewittRoss}), and we may try to mimic the definition above to construct the $r$-isolated points $\theta_r(\Pi)$. There is no reason to expect that $\theta_r$ will be equivariant. This is because $G/H$ admits a proper left-invariant metric essentially only when $H$ is compact or normal\cite{ADMetric}. 
\end{example}


Recall that a (possibly infinite) measure preserving action $G \acts (X, \mu)$ is \emph{ergodic} if every $G$-invariant measurable subset $A \subseteq X$ (that is, $GA = A$) is $\mu$-null or $\mu$-conull. If $G \acts (X, \mu)$ is a probability measure preserving action, then it is said to be \emph{mixing} if for all measurable $A, B \subseteq X$, we have
\[
    \lim_{g \to \infty} \mu(gA \cap B) = \mu(A) \mu(B).
\]

\section{The ideal Poisson Voronoi tessellation and ergodicity}

For this section, $G$ denotes a real semisimple Lie group or $\Aut(T_{d_1}) 
\times \cdots \times \Aut(T_{d_k})$, the product of automorphism groups of at least two $d_i$-regular trees, with $d_i \geq 3$. The ideal Poisson Voronoi tessellation (IPVT) is an invariant random tessellation of the associated symmetric space\footnote{Here, $K < G$ is the unique (up to conjugacy) maximal compact subgroup. For the case where $G$ is a product of automorphism groups of trees, this is the product of the stabiliser of a point in each tree.} $X = G/K$. In essence, the IPVT is constructed by taking the Poisson Voronoi tessellation on the space and letting the intensity tend to zero. In \cite{IPVTlyons} this is done for $\HH^d$ in the literal sense: they prove weak convergence of the tessellations themselves. An explicit description of the limit tessellation is provided in Section 3 of the paper, and one can easily sample from the IPVT (in a finite volume subset of $\HH^d$), and many beautiful simulations of the process are illustrated in \cite{IPVTlyons}.

In \cite{FMW}, the approach to studying the IPVT is via \emph{generalised} Voronoi tessellations instead, which we now discuss.

Let $X$ denote the symmetric space associated to $G$. Given a countable subset $\omega \subset C(X; \RR)$ of continuous functions on $X$, one can associate to it its generalised Voronoi tessellation 
\[
    \mathscr{V}(\omega) = \{ V_\omega(f) \}_{f \in \omega}, \text{ where } V_\omega(f) = \{ y \in X \mid f(y) \leq f'(y) \text{ for all } f' \in \omega \}.
\]
A priori, this is merely an ensemble of closed sets. However, if $\omega$ comes from a reasonably well behaved class of functions then $\mathscr{V}(\omega)$ will form a locally finite cover of $X$. This is the case, for example, if the family of functions is of the form $\{d(x, \bullet) \mid x \in \omega \}$, where $\omega \subset X$ is a locally finite subset, in which case $\mathscr{V}(\omega)$ is simply the usual Voronoi tessellation of $X$ associated to $\omega$. 

In \cite{FMW}, one takes the Poisson point process $\Pi_t$ of intensity $t$ on $G$, and looks at some family of functions $\Phi_t(\Pi_t)$. The functions are essentially those that measure the distance to the points of $\Pi_t$, but suitably normalised. It is shown that there is a unique weak limit $\Upsilon$ of this invariant random family of functions on $X$. Note that $\Upsilon$ is itself a Poisson point process on $C(X;\RR)$ with respect to a Borel measure. The generalised Voronoi tessellation of $\Upsilon$ is what is referred to as the IPVT. Strictly speaking, it is not shown that the Poisson Voronoi tessellations themselves weakly converge to the IPVT. For the purposes of the paper \cite{FMW} (namely, proving the fixed price one property of $G$), this was not required. 

We now give an alternative description of the IPVT as a $G$-action. Recall that $G$ admits a unique (up to conjugacy) \emph{minimal parabolic subgroup} $P < G$. For semisimple Lie groups this is standard, whilst for products of trees it is simply the product of the stabiliser of an end in each factor. The subgroup $P$ is closed, but not unimodular, and thus admits a nontrivial modular homomorphism $\Delta : P \to \RR_{\geq 0}$. Additionally, there is a nontrivial Radon-Nikodym cocycle $c : G \times G/P \to \RR_{\geq 0}$. We may then take the action $G \acts G/P \times \RR_{\geq 0}$,
\[
    g(xP, t) = (gxP, c(g, xP)t),
\]
and note that this action preserves a unique (up to scaling) measure $m$. 

Finally, there is\footnote{For an explicit description, see Sections 4.3 and 5.1 of \cite{FMW}.} a $G$-equivariant map $\beta : G/P \times \RR_{\geq 0} \to C(X; \RR)$, where $G$ acts on $C(X;\RR)$ by the shift map. 

Let $\Pi$ denote the Poisson point process on $Z := G/P \times \RR_{\geq 0}$ with intensity measure $m$. Then the IPVT is the generalised Voronoi tessellation associated to the family of functions $\beta(\Pi)$. 

In \cite{IPVTlyons}, a notion of  indistinguishability of the cells of the IPVT is introduced, and in Question 7.8 of that paper it is shown to be equivalent to proving that the Poisson point process on $(Z, m)$ admits no nontrivial thinnings. We prove this in Section \ref{indistSect} of this paper, and the proof will require some ergodic theoretic statements that we now prove. 

\begin{lem}\label{BaseErgodicity}
    The action $G \acts (G/P \times \RR_{\geq 0}, m)$ is ergodic.
\end{lem}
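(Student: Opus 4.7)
The plan is to realize $G/P \times \RR_{\geq 0}$ as a transitive homogeneous $G$-space $G/H$ for an explicit closed unimodular subgroup $H$, and then invoke the standard ergodicity of transitive measure-preserving actions on homogeneous spaces.

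First, I would compute the stabilizer of the base point $(eP, 1)$. From the formula $g \cdot (eP, 1) = (gP, c(g, eP))$, the stabilizer consists of $g \in P$ with $c(g, eP) = 1$. The cocycle identity implies that $p \mapsto c(p, eP)$ is a continuous homomorphism $P \to \RR_{>0}$; unwinding the definition of $c$ as the Radon--Nikodym cocycle of the natural quasi-invariant measure on $G/P$, this homomorphism coincides (up to sign convention) with the modular function $\Delta_P$. So the stabilizer is $H := \ker \Delta_P$, which is closed and unimodular (as the kernel of a modular function). For $G$ semisimple, $P$ is connected and non-unimodular, hence $\Delta_P$ surjects onto $\RR_{>0}$; given any target $(xP, t)$, choosing $g_0 P = xP$ and $p \in P$ with $\Delta_P(p) = t / c(g_0, eP)$ yields $g_0 p \cdot (eP, 1) = (xP, t)$. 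Thus $G$ acts transitively on $G/P \times \RR_{>0}$, and this space is $G$-equivariantly identified with $G/H$; the measure $m$ is therefore (a scalar multiple of) the essentially unique $G$-invariant Radon measure on the unimodular quotient $G/H$.

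Ergodicity of the transitive action on $G/H$ is then a general fact: a $G$-invariant Borel subset $E \subset G/H$ lifts to a left-$G$-invariant, right-$H$-invariant Haar-measurable subset of $G$, which must be Haar-null or Haar-conull by the ergodicity of left translation of $G$ on itself. The main mild obstacle is the tree case $G = \prod_i \Aut(T_{d_i})$, where the modular function $\Delta_P$ has discrete image $\prod_i d_i^{\ZZ}$ in $\RR_{>0}$; there, the $\RR_{\geq 0}$ factor should be reinterpreted as the image of the cocycle, restoring transitivity, after which the argument proceeds unchanged. A secondary sanity check is the precise identification of $p \mapsto c(p, eP)$ with $\Delta_P$ (modulo inversion conventions), but this is a routine unwinding of the definitions.
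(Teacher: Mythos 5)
Your argument for the semisimple case is correct and is essentially the argument the paper is gesturing at: the paper offers no proof beyond the phrase ``surjectivity of the modular homomorphism'' and a pointer to Zimmer, and what that phrase encodes is exactly your observation that surjectivity of $\Delta_P$ makes the skew-product action transitive with stabiliser $\ker\Delta_P$ (which is indeed closed, normal in $P$, and unimodular), after which ergodicity is the standard ergodicity of an essentially transitive action. Two cosmetic slips: $M$ need not be connected so $P$ need not be either (irrelevant, since surjectivity of $\Delta_P$ only needs the $A$-factor and $2\rho\neq 0$), and one should note that $G/P\times\{0\}$ is $m$-null so that passing from $\RR_{\geq 0}$ to $\RR_{>0}$ is harmless.

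The one genuine soft spot is the tree case, which you correctly flag but do not actually resolve. The image of $\Delta_P$ there is $\prod_i (d_i-1)^{\ZZ}$ (not $\prod_i d_i^{\ZZ}$), and this subgroup of $\RR_{>0}$ can be discrete (e.g.\ $T_3\times T_3$) or dense but not closed (e.g.\ $T_3\times T_4$). In the discrete case your fix --- replace the fibre by the value group, restoring transitivity --- works, but it changes the space and measure, so one must check this matches the construction actually used in \cite{FMW}. In the dense-but-not-closed case the fix does not restore transitivity: orbits in $G/P\times\RR_{>0}$ are countable unions of fibres over a dense countable subgroup, the action is properly ergodic at best, and the homogeneous-space argument simply does not apply; there one needs a genuinely different mechanism, e.g.\ the Maharam-extension/Mackey-range criterion that the skew product by the Radon--Nikodym cocycle is ergodic when the essential values of the cocycle generate a dense subgroup of $\RR_{>0}$. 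So for products of trees your proof as written is incomplete, though to be fair the paper's own one-line justification has the same gap.
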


The above follows from surjectivity of the modular homomorphism $\Delta$, and holds more generally (see the remarks after Definition 4.2.21 in \cite{Zimmer}).

Let $\mu$ denote the law of the Poisson point process on $Z$ with intensity measure $m$. 

\begin{lem}\label{MixingLemma}
    The action $G \acts (\MM(Z), \mu)$ is ergodic (in fact, mixing).
\end{lem}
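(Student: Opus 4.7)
The plan is to reduce mixing of the Poisson suspension $G \acts (\MM(Z), \mu)$ to a decay statement on the base action $G \acts (Z, m)$ and then establish the latter via the Howe-Moore theorem. The key classical input (derivable either from the Fock space decomposition
\[
    L^2(\MM(Z), \mu) \ominus \mathbb{C} \cong \bigoplus_{n \geq 1} (L^2(Z, m))^{\otimes_s n},
\]
or by a direct computation with joint Laplace functionals of cylinder events $\{\abs{\Pi \cap A_i} = k_i\}$) is that a Poisson suspension over a $\sigma$-finite measure preserving action is mixing if and only if the base is \emph{zero-type}: for every pair of Borel sets $A, B \subseteq Z$ with $m(A), m(B) < \infty$ we have $m(gA \cap B) \to 0$ as $g \to \infty$. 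The Fock space viewpoint is the cleanest: matrix coefficients on the suspension are polynomials in matrix coefficients on the base, so vanishing on the base propagates to all symmetric tensor powers and hence to the whole orthogonal complement of the constants.

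Next I would establish zero-type as a matrix coefficient estimate. The observation is that $m(gA \cap B) = \langle g \cdot \1_A, \1_B \rangle_{L^2(Z, m)}$ is a matrix coefficient of the unitary representation $G \acts L^2(Z, m)$. By Lemma \ref{BaseErgodicity} the base action is ergodic, so any $G$-invariant measurable function on $Z$ is essentially constant; since $m(Z) = \infty$, the only such constant lying in $L^2(Z, m)$ is zero. Hence $L^2(Z, m)$ has no nonzero $G$-invariant vectors, and the Howe-Moore theorem delivers the required vanishing of matrix coefficients at infinity. Combined with the previous paragraph this gives mixing, and in particular ergodicity.

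The main obstacle is a uniform justification of Howe-Moore across the two families in Theorem \ref{SellTheorem}. For a connected real semisimple Lie group (say without compact factors) this is the classical vanishing theorem, with the usual reduction handling compact factors. For a product $\Aut(T_{d_1}) \times \cdots \times \Aut(T_{d_k})$ with $d_i \geq 3$ one needs a totally-disconnected analogue of Howe-Moore for tree automorphism groups, which is available in sufficient generality; since mixing of unitary representations is preserved under direct products, one obtains the vanishing on the full product. Modulo this spectral input, the reduction to the base via Fock space is essentially routine, so the entire content of the lemma is concentrated in the Howe-Moore step.
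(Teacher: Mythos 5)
Your proposal is correct and follows essentially the same route as the paper's sketch: reduce mixing of the Poisson suspension to a decay condition on finite-measure sets in the base, then obtain that decay from the Howe--Moore theorem (the paper cites Theorem 3.8(3) of \cite{FMW} for this input, while you derive it directly from the absence of invariant vectors in $L^2(Z,m)$, which is equivalent). Note that your zero-type condition $m(gA \cap B) \to 0$ is the correct one for the infinite-measure base; the paper's displayed limit $m(A)m(B)$ appears to be a slip carried over from the probability-measure Bernoulli analogy.
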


\begin{proof}[Sketch of proof]
    This is proved in a similar way to establishing mixing for Bernoulli shifts, as for example in Proposition 7.3 of \cite{LyonsPeres}. There are minor technical difficulties in extending the proof to Poisson point processes, see Proposition 8.13 of \cite{LastPenrose}. The key task is to show that for $A, B \subseteq Z$ of finite volume, we have
    \[
        \lim_{n \to \infty} m(gA \cap B) = m(A)m(B),
    \]
    which follows from the Howe-Moore property (see Item (3) of Theorem 3.8 of \cite{FMW}).
\end{proof}

\begin{lem}\label{ErgodicityLemma}
    The action $G \acts (Z \times \MM(Z), m \otimes \mu)$ is ergodic. 
\end{lem}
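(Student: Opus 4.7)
The plan is to show that every $G$-invariant $f \in L^\infty(Z \times \MM, m \otimes \mu)$ is essentially constant, which is equivalent to ergodicity. Define $f_0(z) := \int_\MM f(z, \omega) \, d\mu(\omega)$; by Fubini and $G$-invariance of both $\mu$ and $f$, $f_0$ is $G$-invariant on $Z$, so Lemma \ref{BaseErgodicity} forces $f_0 \equiv c$ for some constant $c$. Setting $\tilde f(z) := f(z, \cdot) - c \in L^2_0(\MM, \mu)$ (where $L^2_0$ denotes the orthogonal complement of constants), a direct computation from the $G$-invariance of $f$ shows $\tilde f$ is $G$-equivariant for the Koopman representation $\pi_\MM$ of $G$ on $L^2_0(\MM)$: $\tilde f(gz) = \pi_\MM(g) \tilde f(z)$. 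It suffices to show $\tilde f \equiv 0$ a.e., which, since $z \mapsto \|\tilde f(z)\|^2$ is $G$-invariant on $Z$ and hence constant by Lemma \ref{BaseErgodicity}, reduces to showing $\tilde f$ vanishes at a single generic point $z_0$.

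The action $G \acts Z = G/P \times \RR_{>0}$ is transitive, with stabiliser of a base point $z_0$ given by $P_0 := \ker \Delta \subseteq P$, the kernel of the modular homomorphism. Since $hz_0 = z_0$ for every $h \in P_0$, the equivariance relation yields $\tilde f(z_0) = \pi_\MM(h) \tilde f(z_0)$ for all $h \in P_0$; that is, $\tilde f(z_0) \in L^2_0(\MM, \mu)^{P_0}$. The proof therefore reduces to showing $L^2_0(\MM, \mu)^{P_0} = 0$. By Lemma \ref{MixingLemma}, $G \acts (\MM, \mu)$ is mixing, equivalent to $\pi_\MM$ on $L^2_0(\MM)$ being $C_0$: the matrix coefficients $\langle \pi_\MM(g) u, v \rangle$ vanish as $g \to \infty$ in $G$. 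The subgroup $P_0$ is non-compact (it contains the unipotent radical $N$ of $P$ in the semisimple Lie case, and an analogous horocyclic subgroup in the tree case), so we may pick $h_n \in P_0$ with $h_n \to \infty$ in $G$. Then for $v \in L^2_0(\MM)^{P_0}$ we have $\|v\|^2 = \langle \pi_\MM(h_n) v, v \rangle \to 0$, forcing $v = 0$.

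The main technical subtlety is the pointwise application of the equivariance relation $\tilde f(hz_0) = \pi_\MM(h) \tilde f(z_0)$ for every $h \in P_0$: our hypothesis only yields equivariance of $\tilde f$ a.e. in $z$ for each fixed $g \in G$, whereas $P_0$ is measure-zero in $G$ (with respect to Haar measure), so a naive Fubini argument does not directly give the equivariance simultaneously for all $h \in P_0$ at a single $z_0$. This is handled by passing from the a.e.-equivariant Borel map to a literally equivariant Borel representative on a conull $G$-invariant subset of $Z$, a standard but nontrivial manoeuvre in Polish-$G$-space theory. Once this is in place, the argument above goes through verbatim and ergodicity of the product action follows.
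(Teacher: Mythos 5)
Your proof is correct, and it takes a route that is related to but genuinely different from the paper's. The paper first invokes Schmidt's theorem that the product of a mildly mixing action with a \emph{properly ergodic} infinite measure preserving action is ergodic, and then separately treats the essentially transitive case $Z \cong G/H$: there it rules out compact $H$ via double recurrence/conservativity, and for noncompact $H$ argues fiberwise that any $G$-invariant set $A \subseteq G/H \times \MM$ has fibers $A_{gH}$ that are $gHg^{-1}$-invariant, hence of $\mu$-measure $0$ or $1$ by mixing. You bypass the case distinction entirely by observing that $G \acts (Z,m)$ \emph{is} essentially transitive, with stabiliser $P_0 = \ker\Delta$, which you can see is noncompact directly (it contains the unipotent radical), so only the second branch of the paper's argument is ever needed; your Hilbert-space formulation (a $G$-invariant function induces a $P_0$-fixed vector in $L^2_0(\MM,\mu)$, killed by the $C_0$ property of the Koopman representation) is precisely the unitary-representation version of the paper's fiberwise set argument. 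What the paper's structure buys is that it never has to compute the orbit structure of $G \acts Z$ or identify the stabiliser; what yours buys is a shorter, more concrete argument that avoids Schmidt's mild-mixing machinery and the conservativity detour. The one point you rightly flag — upgrading a.e.-equivariance of $z \mapsto \tilde f(z)$ to strict equivariance at a point so that the full measure-zero subgroup $P_0$ can be applied — is real but standard (it is the same issue the paper silently handles by working with a strictly invariant representative of $A$); an equivalent and perhaps cleaner way to dispatch it is to pull $f$ back to $G \times \MM$ via $(g,\omega) \mapsto (gP_0, g\omega)$, note the pullback is left-translation invariant in $g$ and hence of the form $\Phi(\omega)$ with $\Phi$ being $P_0$-invariant, and conclude by ergodicity of $P_0 \acts (\MM,\mu)$.
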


\begin{proof}
    In \cite{Schmidt}, a notion of \emph{mild mixing} is introduced (a weakening of the notion of mixing). It follows from Proposition 2.3 of that paper that the product of a mildly mixing action and a properly ergodic\footnote{Recall that a nonsingular action is \emph{properly ergodic} if it is ergodic, but not concentrated on a single orbit, that is, it is not essentially transitive.} infinite measure preserving action is ergodic. Thus, the action $G \acts (Z \times \MM(Z), m \otimes \mu)$ is ergodic if $G \acts (Z, m)$ is properly ergodic.

    If $G \acts (Z, m)$ is \emph{not} properly ergodic, then there exists a closed, unimodular subgroup $H < G$ such that $G \acts (Z, m)$ is ismorphic (as an action of measure spaces) to the action of $G \acts G/H$ with the invariant measure on the quotient. We distinguish two cases, as to whether or not $H$ is compact. Compactness of $H$ leads to a contradiction, and in the case of noncompactness we prove ergodicity of $G \acts (Z \times \MM(Z), m \otimes \mu)$ directly.

    In Theorem 3.8 (3) of \cite{FMW}, it is shown that the action $G \acts (Z, m)$ is \emph{doubly recurrent}. This immediately implies that the action $G \acts (Z, m)$ is \emph{conservative}, that is, for all measurable $A \subseteq Z$ and $m$ almost every $a \in A$, the set of return times $\{g \in G \mid ga \in A \}$ is unbounded. 

   Observe that for a transitive action $G \acts G/H$, the set of return times of a point $gH$ is simply the conjugate subgroup $gHg^{-1}$. It follows from conservativity that the subgroup $H$ cannot be compact. 

   Now $H$ is noncompact, and hence too all of its conjugates $gHg^{-1}$. Since mixing passes to noncompact subgroups, we get that the actions $gHg^{-1} \acts (\MM(G/H), \mu)$ are all mixing (and hence ergodic). Suppose that $A \subseteq G/H \times \MM(G/H)$ is $G$-invariant. Then for all $(gH, \omega) \in A$, we have that $(gH, \gamma.\omega) \in A$ for all $\gamma \in gHg^{-1}$. It follows from mixing then that for all $gH \in G/H$,
   \[
        \mu(\{\omega \in \MM(G/H) \mid (gH, \omega) \in A \}) = \text{0 or 1}.
   \]
   This is a $G$-invariant function, and so it follows that $A$ is either null or conull, as desired.
\end{proof}

\section{Indistinguishability }\label{indistSect}

We now state and prove the main characterisation of fullness and emptiness of thinnings that we will use for our alternative proof of Theorem \ref{MeyerovitchTheorem}:

\begin{lem}\label{ThinLemma}
Let $\Pi$ be the Poisson point process on $X$ and $\theta(\Pi)$ a thinning of it.

The thinning is full if and only if for $m$ almost every $x \in X$, we have $\PP[x \in \theta(\Pi \cup \{x\})] = 1$. 

The thinning is empty if and only if for $m$ almost every $x \in X$, we have $\PP[x \in \theta(\Pi \cup \{x\})] = 0$. 

In the case of a homogeneous space $(X, m) = (G/H, \lambda)$, a thinning is full or empty if and only if $\PP[H \in \theta(\Pi \cup \{H\})]$ is $1$ or $0$ respectively.

\end{lem}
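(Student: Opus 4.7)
My plan is to apply the Mecke equation directly to two indicator functions --- one encoding emptiness and one encoding fullness --- and then exploit equivariance in the homogeneous case.

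For emptiness, I would fix a measurable $A \subseteq X$ with $m(A) < \infty$ and apply the Mecke equation to $f(x, \omega) = \1[x \in A]\1[x \in \theta(\omega)]$. The left side becomes $\EE[\abs{\theta(\Pi) \cap A}]$ and the right side $\int_A \PP[x \in \theta(\Pi \cup \{x\})] \, dm(x)$. Since $\abs{\theta(\Pi) \cap A}$ is a nonnegative integer valued random variable, its expectation vanishes iff $\theta(\Pi) \cap A = \empt$ almost surely. Exhausting $X$ by a countable increasing family of such $A$ (using $\sigma$-finiteness of $m$) will promote this to the required equivalence: $\theta(\Pi) = \empt$ a.s.\ iff $\PP[x \in \theta(\Pi \cup \{x\})] = 0$ for $m$-a.e.\ $x$.

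For fullness, I would repeat the scheme with the complementary indicator $f(x, \omega) = \1[x \in A]\1[x \notin \theta(\omega)]$. The left side is now $\EE[\abs{(\Pi \setminus \theta(\Pi)) \cap A}]$, and since $\theta(\Pi) \subseteq \Pi$ this count vanishing for every finite-measure $A$ is equivalent to $\theta(\Pi) = \Pi$ almost surely. The right side becomes $\int_A (1 - \PP[x \in \theta(\Pi \cup \{x\})]) \, dm(x)$, which vanishes for all such $A$ exactly when $\PP[x \in \theta(\Pi \cup \{x\})] = 1$ for $m$-almost every $x$.

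The homogeneous-space statement falls out as a corollary: equivariance of $\theta$ together with $G$-invariance of the law of $\Pi$ makes the map $x \mapsto \PP[x \in \theta(\Pi \cup \{x\})]$ genuinely $G$-invariant, hence constant on $G/H$. So the ``$m$-almost every $x$'' conditions collapse to the value at the single coset $H$.

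I do not expect any substantive obstacle. The only mildly delicate points are verifying measurability of the chosen indicators (immediate from measurability of $\theta$ and of the evaluation map $(x, \omega) \mapsto \1[x \in \omega]$ on $X \times \MM$) and the passage to $\sigma$-finite $m$ via exhaustion; this lemma is in essence just the Mecke identity specialised to indicator functions and repackaged as a statement about thinnings.
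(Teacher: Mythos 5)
Your proposal is correct and follows essentially the same route as the paper: both arguments specialise the Mecke equation to the indicator $\1[x \in A]\1[x \in \theta(\omega)]$ and read off fullness/emptiness from the identity $\EE\abs{A \cap \theta(\Pi)} = \int_A \PP[x \in \theta(\Pi \cup \{x\})]\,dm(x)$, then use equivariance plus invariance of the law to collapse to the single coset $H$ in the homogeneous case. Your use of the complementary indicator $\1[x \notin \theta(\omega)]$ for the fullness direction is only a cosmetic variant of the paper's step of subtracting $\EE\abs{A \cap \theta(\Pi)}$ from $m(A) = \EE\abs{A \cap \Pi}$ using $\theta(\Pi) \subseteq \Pi$.
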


A point process $\Pi$ on a $G$-space $X$ is \emph{indistinguishable} if it admits no nontrivial (non empty, non full) thinning.

\begin{proof}[Proof of Lemma \ref{ThinLemma}]

We apply the Mecke equation with the function $f(x, \Pi) = \1[x \in A]\1[x \in \theta(\Pi)]$, where $A \subseteq X$ has finite volume. Then the Mecke equation states that
\[
\EE\abs{A \cap \theta(\Pi)} = \int_A \PP[x \in \theta(\Pi \cup \{x\})]d\lambda(x).
\]
If the thinning is full, then the lefthand side is simply $m(A)$, and hence $\PP[x \in \theta(\Pi \cup \{x\})] = 1$ for all $m$ almost every $x \in A$. By choosing an exhaustion of $X$ we can extend this to $m$ almost every $x \in X$. A similar argument shows that if the thinning is empty, then $\PP[x \in \theta(\Pi \cup \{x\})] = 0$ for $m$ almost every $x \in X$.

Conversely, if $\PP[x \in \theta(\Pi \cup \{x\})] = 1$ for $m$ almost every $x \in X$, then the righthand side of the equation above is simply $m(A)$. But $\theta(\Pi)$ is a subset of $\Pi$, so we conclude that $(\Pi \setminus \theta(\Pi)) \cap A = \empt$ almost surely. By an exhaustion argument we see that $\theta$ is full. A similar argument shows that if $\PP[x \in \theta(\Pi \cup \{x\})] = 0$ for $m$ almost every $x \in X$, then $\theta$ is empty. 

Finally, for the case of a homogeneous space observe that by distributional invariance of $\Pi$ and equivariance of $\theta$ we have
\[
    \PP[gH \in \theta(\Pi \cup \{gH\})] = \PP[gH \in \theta(g\Pi \cup \{gH\})] = \PP[H \in \theta(\Pi \cup \{H\})],
\]
and so the desired characterisation follows from the previous arguments.
\end{proof}

\begin{proof}[Proof of Theorem \ref{MeyerovitchTheorem}]

Define $A = \{ (x, \omega) \in X \times \MM \mid x \in \theta(\omega \cup \{x\})\}$, and observe that $A$ is $G$-invariant. So by ergodicity, either $(m \otimes \mu)(A) = 0$ or $(m \otimes \mu)(A^c) = 0$. In the former case, we have
\[
    0 = (m \otimes \mu)(A) = \int_X \PP[x \in \theta(\Pi \cup \{x\})]dm(x),
\]
and hence $\PP[x \in \theta(\Pi \cup \{x\})] = 0$ for $m$ almost every $x \in X$. Thus $\theta$ is the empty thinning by Lemma \ref{ThinLemma}. The other case is similar.
\end{proof}

\begin{proof}[Proof of Theorem \ref{SellTheorem}]

As noted in Question 7.8 of \cite{IPVTlyons}, indistinguishability of the cells of the IPVT is equivalent to showing that the Poisson point process on $(Z, m)$ admits no nontrivial thinnings. The proof indistinguishability is thus is a direct application of Theorem \ref{MeyerovitchTheorem} and Lemma \ref{ErgodicityLemma}.
\end{proof}


In percolation theory, indistinguishability admits an ergodic interpretation. For a countable group $\Gamma$, there is a natural subrelation of the orbit equivalence relation of the action $\Gamma \acts \{0,1\}^\Gamma$ referred to as the \emph{cluster equivalence relation} (see \cite{GaboriauLyons}). Indistinguishability of the percolation model corresponds exactly to ergodicity of this subrelation (restricted to the infinite locus). 

After circulation of an early draft of this article, Ben Hayes asked if indistinguishability for the IPVT also admits an ergodic interpretation. Antoine Poulin, in private communication, suggested the following:

\begin{question}\footnote{This question has since been resolved by Kohki Sakamoto, in private communication.}
    Is the restricted rerooting equivalence relation of the IPVT induced on the Palm process of the Poisson point process on $X$ ergodic?
\end{question}

The restricted rerooting relation is discussed in \cite{FMW}, Section 7.1, where it is denoted as $\mathcal{S}$ and referred to simply as the restricted sub-relation. In that paper it is shown to be hyperfinite, which was sufficient for the application to the fixed price one problem. Note that if the IPVT admitted nontrivial thinnings, then the restricted rerooting relation would not be ergodic. 

\printbibliography

\end{document}